\begin{document}
\title{Lower bounds for the simplexity of the n-cube}
\author{Alexey Glazyrin}
\address{University of Texas at Brownsville, 80 Fort
Brown, Brownsville, TX, 78520, USA}

\email{Alexey.Glazyrin@gmail.com}

\maketitle

\begin{abstract}
In this paper we prove a new asymptotic lower bound for the
minimal number of simplices in simplicial dissections of
$n$-dimensional cubes. In particular we show that the number of simplices in dissections of $n$-cubes without additional vertices is at least $(n+1)^{\frac {n-1} 2}$.
\end{abstract}

\pagestyle{plain} \hyphenation{} \textwidth=13cm \textheight=20cm

\newtheorem{lem}{Lemma}
\newtheorem{thm}{Theorem}
\newtheorem{cor}{Corollary}
\newtheorem{sta}{Statement}
\newtheorem{prop}{Proposition}
\theoremstyle{definition}
\newtheorem{defn}{Definition}
\theoremstyle{remark}
\newtheorem*{rem}{Remark}
\newtheorem{expl}{Example}
\newcommand { \ib }[1] {\textit{\textbf{#1}}}
\newcommand{\R}{\ensuremath{\mathbb{R}}}

\renewcommand{\proofname}{Proof}
\makeatletter \headsep 10 mm \footskip 10 mm
\renewcommand{\@evenhead}%

\vskip 15pt

\section{Introduction}
This work is devoted to some properties of dissections of convex
polytopes into simplices with vertices in vertices of the
polytope. From now on by a dissection we mean representation of a
polytope as a union of non-overlapping (i.e. their interiors do
not intersect) simplices. In case each two simplices of a
dissection intersect by their common face we will call such
dissection a triangulation. Obviously each
dissection of a convex planar polygon is a triangulation. However for higher-dimensional convex polytopes
that is not true.

One of the most important problems concerning triangulations is that of finding the minimal triangulation for a given polytope,
i.e. triangulation with the minimal number of simplices. For a
polygon the number of triangles in a triangulation is always equal
to $v-2$, where $v$ is the number of vertices of the polygon. The
situation is very different even for the three-dimensional case.
Three-dimensional cubes can be triangulated both into six or into five
tetrahedra.

In the next section of this paper we consider dissections of
\textit{prismoids} and prove some properties for them (see also \cite{Glaz}). By
prismoids we mean $n$-dimensional polytopes all vertices of which
lie in two parallel $(n-1)$-dimensional hyperplanes. For instance,
the set of prismoids contains cubes, prisms, 0/1-polytopes (i.e.
polytopes all Cartesian coordinates of which are 0 or 1, see
\cite{Ziegler2, Ziegler}). In the third section we will show how these
properties can be used for finding lower bounds for the number of simplices in simplicial dissections
of the $n$-dimensional cube. In the last section we will prove the new
asymptotic lower bound for the number of simplices in simplicial dissections of the $n$-cubes.

We use the following notations: $dis(n)$ is the minimal number of
simplices in a dissection of the $n$-dimensional cube, $triang(n)$
is the minimal number of simplices in a triangulation of the
$n$-dimensional cube, and $\rho(n)$ is the maximal determinant of a
$0/1$-matrix. The term simplexity used in the literature is somewhat ambiguous. We will use this term for $triang(n)$. Obviously $triang(n) \geq dis(n)$. In our work all the lower bounds will be given for dissections and subsequently they are all
true for the simplexity.

There is an obvious lower bound for $dis(n)$:$$dis(n)\geq \frac
{n!} {\rho(n)}.$$ The maximal volume of a simplex with vertices in
the vertices of the $0/1$-cube is not greater than $\frac
{\rho(n)} {n!}$, therefore we immediately achieve this bound. An
upper bound for $\rho(n)$ can be easily obtained by some matrix
transformations and Hadamard's inequality (see \cite{Ziegler2} for details; the generalization of this
inequality will be proved in the last section of the paper):
\begin{lem}\label{lem:haddet}
$\rho(n)\leq 2 \left(\dfrac {\sqrt{n+1}} {2}\right)^{n+1}$
\end{lem}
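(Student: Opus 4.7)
The plan is to relate $0/1$-determinants in size $n$ to $\pm 1$-determinants in size $n+1$, and then apply Hadamard's inequality on the larger matrix, where the row norms have a clean value of $\sqrt{n+1}$.

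First I would fix an $n\times n$ matrix $A$ with $0/1$ entries achieving $|\det A|=\rho(n)$, and construct from it an $(n+1)\times(n+1)$ matrix $B$ with entries in $\{-1,+1\}$. The standard device is to border $A$ with an extra row and column of $1$'s, i.e.\ form
\[
B \;=\; \begin{pmatrix} 1 & \mathbf{1}^{T} \\ \mathbf{1} & 2A-J \end{pmatrix},
\]
where $\mathbf{1}$ is the all-ones column and $J$ the all-ones $n\times n$ matrix. Clearly all entries of $B$ lie in $\{-1,+1\}$. To compute $|\det B|$, I would subtract the first row from every other row; this preserves the determinant, leaves the first row equal to $(1,1,\dots,1)$, and turns the bottom-right $n\times n$ block into $2A$ (the first column of the other rows becoming $0$). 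Expanding along the first column then gives $|\det B| = 2^{n}|\det A| = 2^{n}\rho(n)$.

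Next I would apply Hadamard's inequality to $B$: since each of its $n+1$ rows has Euclidean norm exactly $\sqrt{n+1}$,
\[
|\det B| \;\leq\; \bigl(\sqrt{n+1}\bigr)^{n+1}.
\]
Combining the two estimates yields $2^{n}\rho(n)\leq (\sqrt{n+1})^{n+1}$, and rearranging gives
\[
\rho(n) \;\leq\; \frac{(\sqrt{n+1})^{n+1}}{2^{n}} \;=\; 2\left(\frac{\sqrt{n+1}}{2}\right)^{n+1},
\]
which is the claimed bound.

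The only delicate step is the bordering construction and the determinant bookkeeping; everything else is a direct invocation of Hadamard. I do not expect any real obstacle, but I would double-check that the row operation indeed produces the block $2A$ (so that the factor $2^n$, not $(-2)^n$ up to a sign, appears after the expansion), since that is where the sharp constant $2$ in front of $(\sqrt{n+1}/2)^{n+1}$ comes from.
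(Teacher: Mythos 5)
Your proof is correct in substance and arrives at the stated bound, but by a genuinely different route from the paper's. The paper does not prove Lemma~\ref{lem:haddet} by passing to $\pm1$ matrices at all: it defers to the generalization, Lemma~\ref{lem:detgen}, whose proof keeps the bordered $(n+1)\times(n+1)$ matrix $M$ with first column $b_0=\mathbf{1}$ and instead applies the \emph{column} transformations $b_j\mapsto b_j-\frac{i_j}{n+1}b_0$, after which $\|b_j'\|^2=\frac{i_j(n+1-i_j)}{n+1}$ and Hadamard gives a bound depending on the individual column sums $i_j$; Lemma~\ref{lem:haddet} then follows from $i_j(n+1-i_j)\le\frac{(n+1)^2}{4}$. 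That per-column refinement is exactly what Theorem~\ref{thm:asbound} needs, so your uniform bound, while equivalent for Lemma~\ref{lem:haddet} itself, could not replace Lemma~\ref{lem:detgen}. One bookkeeping point you flagged does in fact go wrong: subtracting the first row of your $B$ from the others turns the lower-right block into $(2A-J)-J=2(A-J)$, not $2A$, so what you actually get is $|\det B|=2^n|\det(J-A)|$, and the identity $|\det B|=2^n|\det A|$ is false for general $A$. This is harmless for the final bound, since $J-A$ is again a $0/1$ matrix and complementation is a bijection on $0/1$ matrices (so you may start from the complement of an optimal matrix, or equivalently use the block $J-2A$, which reduces to $-2A$ after the row operations); but the construction should be stated that way to make the factor $2^n\rho(n)$ legitimate.
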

Hence the following bound is true
\begin{thm}\label{thm:euclbound}
$$dis(n)\geq \dfrac {n!} {2 \left(\dfrac {\sqrt{n+1}}
{2}\right)^{n+1}} =: E(n)$$
\end{thm}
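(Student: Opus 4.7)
The statement follows by combining two ingredients both already on the page, so the plan is essentially a substitution; what matters is to write it in the right order and to be clear about the volume argument that produces the $n!/\rho(n)$ bound.

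First I would restate the volume inequality that gives the obvious bound. Any simplex $S$ whose vertices are vertices of the unit $n$-cube has, up to sign, volume $\tfrac{1}{n!}|\det M|$ where $M$ is an $n\times n$ matrix whose rows are differences of two $0/1$-vectors, hence has entries in $\{-1,0,1\}$. A standard argument (bordering $M$ with a row and column of $1$'s and $0$'s and expanding) shows $|\det M|\le \rho(n)$, so $\operatorname{vol}(S)\le \rho(n)/n!$. Since a dissection of the $n$-cube into $N$ simplices $S_1,\dots,S_N$ has interiors that do not overlap, volumes add:
\[
1 \;=\; \operatorname{vol}([0,1]^n) \;=\; \sum_{i=1}^N \operatorname{vol}(S_i) \;\le\; N\cdot \frac{\rho(n)}{n!},
\]
which rearranges to $N\ge n!/\rho(n)$. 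Taking the infimum over dissections gives $dis(n)\ge n!/\rho(n)$.

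Second, I would invoke Lemma \ref{lem:haddet}, which provides the upper bound
\[
\rho(n)\;\le\; 2\Bigl(\tfrac{\sqrt{n+1}}{2}\Bigr)^{n+1}.
\]
Substituting this into the previous inequality (using that $x\mapsto n!/x$ is decreasing in $x>0$) yields
\[
dis(n)\;\ge\;\frac{n!}{\rho(n)}\;\ge\;\frac{n!}{2\bigl(\tfrac{\sqrt{n+1}}{2}\bigr)^{n+1}} \;=\; E(n),
\]
which is the claim. There is no real obstacle here beyond being careful with the $\pm 1$ entries in the volume formula; the content of the theorem is entirely carried by Lemma \ref{lem:haddet}, and the genuinely new lower bounds promised in the abstract will have to come from the arguments in the later sections, not from this chain of two elementary inequalities.
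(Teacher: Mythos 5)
Your proposal is correct and follows the paper's own route exactly: the paper likewise obtains $dis(n)\ge n!/\rho(n)$ from the stated fact that every $0/1$-simplex has volume at most $\rho(n)/n!$, and then substitutes the Hadamard-type bound of Lemma~\ref{lem:haddet}. The only caveat is your parenthetical justification of $|\det M|\le\rho(n)$: an arbitrary $\{-1,0,1\}$-matrix can have determinant exceeding $\rho(n)$ (already for $n=2$), so one should first apply reflections $x_j\mapsto 1-x_j$ to move one vertex of the simplex to the origin, which makes the difference matrix a genuine $0/1$-matrix; the paper simply quotes this standard fact (cf.\ \cite{Ziegler2}) without proof.
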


Better bounds can be achieved by using other volumes instead of
the Euclidean volume. The following bound was proved by W.D~Smith
in \cite{Smith} by means of hyperbolic volume and was the best
asymptotic bound up to the moment.

$$dis(n) \geq H(n) \geq \frac 1 2 6^{\frac n 2} (n+1)^{- \frac {n+1} 2} n!$$

$$\lim_{n \rightarrow \infty} \left(\frac
{H(n)} {E(n)} \right)^{\frac 1 n} = A > 1.2615$$

In Table \ref{table:known_bounds} lower bounds for minimal numbers of simplices
in triangulations and dissections are shown up to dimension eight (for previous resulsts see \cite{Mara,Sallee2,Todd}).
The sign ``='' is used when the number is known exactly.

\begin{table}[h]
\caption{Known bounds}
\label{table:known_bounds}
\centering
\begin{tabular}{|c|c|c|}
    \hline
    n & triang(n) & dis(n) \\
    \hline
    3 & 5 & 5 \\
    4 & =16 \cite{Cottle, Sallee} & =16 \cite{Hughes} \\
    5 & =67 \cite{HA} & 61 \cite{Hughes} \\
    6 & =308 \cite{HA} & 270 \cite{HA}\\
    7 & =1493 \cite{HA} & 1175 \cite{HA}\\
    8 & 5522 \cite{Hughes} & 5522 \cite{Hughes} \\
    \hline
  \end{tabular}
\end{table}

One can also consider triangulations and dissections with additional vertices. Some bounds for simplicial covers and triangulations with additional vertices were obtained in~\cite{bliss-su}.

Smith's method \cite{Smith} is convenient for dissections with additional vertices. We deal only with triangulations and dissections with vertices in vertices of a cube. Hence our result is not a total improvement of the bounds achieved by Smith. There exist well-known examples where the use of new points make a dramatic difference on the size of the dissections \cite{brehm}. It is a famous open problem for many years whether the $n$-cube allows such phenomena.

Upper bounds for $triang(n)$ can be obtained by constructing explicit examples \cite{haiman}. The best bound for the moment is $O(0.816^n n!)$ \cite{OS}.

A quite extensive survey on the minimal simplicial dissections and triangulations of n-cubes can be found in the papers \cite{bliss-su, Smith} mentioned above (see also \cite{zong1,zong2}).

\section{Triangulation of prismoids}

Let all the vertices of an $n$-dimensional polytope $P$ in $\mathbb{R}^n$ lie on two parallel $(n-1)$-dimensional hyperplanes,
i.e. $P$ is an $n$-dimensional prismoid. Without loss of generality we can consider hyperplanes $x_1=0$ and $x_1=1$ (none of the following statements depend on the distance between hyperplanes). Assume also that we have a dissection $\Delta$ of the polytope $P$ into $n$-dimensional simplices. All the vertices of the simplices are vertices of the polytope.

Denote by $S_i$ the set of all simplices with $i$ vertices of $P$ in $x_1=0$ and $(n+1-i)$ vertices of $P$ in $x_1=1$.

Define $\Delta_i=\{T\in\Delta|$ exactly $i$ vertices of $T$ lie in $x_1=0\}$. So $\Delta_i=S_i\bigcap \Delta$. Denote by $q_i$ the
number of simplices in $\Delta_i$, by $T_i^j$ -- the $j$-th simplex in $\Delta_i$, and by $V(T_i^j)$ its $n$-dimensional Euclidean volume (further on all $n$-dimensional Euclidean volumes will be denoted by $V(\cdot)$, and all $(n-1)$-dimensional Euclidean volumes will be denoted by $S(\cdot)$).

\begin{thm}\label{thm:prismoid}
For the prismoid $P$ and its simplicial dissection $\Delta$ let
$\ V(i)$ be the total volume of simplices in $\Delta_i$, i.e.
$V(i)=\sum \limits_{j=1}^{q_i}V(T_i^j)$. Then for each $i$ such that $1\leq i \leq n$, $V(i)$ depends only on $P$, the choice of parallel hyperplanes containing all the vertices of $P$, and $i$, and does not depend on $\Delta$.
\end{thm}

\begin{rem}
$P$ can be a prismoid with respect to two different pairs of
parallel hyperplanes. In this theorem we mean that the pair is
fixed.
\end{rem}

For proving Theorem \ref{thm:prismoid} we need several lemmas.

\begin{lem}\label{lem:mink}
Consider $T\in S_i$ and its intersection $M_t$ with a hyperplane
$x_1=t$, where $t\in[0,1]$. $(n-1)$-dimensional volume $S(M_t)$ satisfies equality $S(M_t)=c(1-t)^{i-1} t^{n-i}$,
where $c$ is some constant not depending on $t$.
\end{lem}

\begin{proof}
Let $A$ be a convex hull of $i$ vertices of the simplex $T$ from the hyperplane $x_1=0$ and let $B$ be a convex hull of
$(n+1-i)$ vertices of $T$ from the hyperplane $x_1=1$. We will show now that $M_t=\{(1-t)A+tB\}$ (here by $\{+\}$ we mean
Minkowski sum of these two sets). Note that any point $Z$ of the intersection we consider divides some segment $XY$ with ratio
$t:(1-t)$, where $X\in A$ and $Y\in B$. Thus $Z=(1-t)X+tY$ and it is obvious that all the points $Z$ that can be expressed this way
lie in the intersection $M_t$.

Let $A_j$ for $1\leq j \leq i$ be the $j$-th vertex of the simplex $A$ and let $B_k, 1\leq k \leq n+1-i,$ be the $k$-th vertex of the
simplex $B$. Notice that all the vectors $\overrightarrow{A_1A_j}$ ($1<j\leq i$) and $\overrightarrow{B_1B_k}$ ($1<k\leq n+1-i$) are linearly independent (over $\R$) altogether (otherwise vectors $\overrightarrow{A_1A_j}$, $\overrightarrow{A_1B_1}$, $\overrightarrow{B_1B_k}$ are linearly dependent and consequently $\overrightarrow{A_1A_j}$, $\overrightarrow{A_1B_k}$ are linearly dependent which contradicts the fact that $A_j$, $B_k$ are the vertices of the $n-$dimensional simplex). Let $O$ be a point of intersection of $A_1B_1$ and the hyperplane $x_1=t$. Now we scale $M_t$ about $O$ with a coefficient $\frac 1 {1-t}$ along the vectors $\overrightarrow{A_1A_j}$ and with a coefficient $\frac 1 t$ along the vectors $\overrightarrow{B_1B_k}$. After this transformation $M_t$ will change to a figure congruent to $\{A+B\}$. Because of the linear independence of $\overrightarrow{A_1A_j}$, $\overrightarrow{B_1B_k}$ we achieve that $S(M_t)=t^{n-i}(1-t)^{i-1}S(A+B)$, where $S$ is an
$(n-1)$-dimensional Euclidean volume. We take $S(A+B)$ as $c$ and the lemma is proved.
\end{proof}

We will use the following lemma:
\begin{lem}\label{lem:bernind}
For each $m\in\mathbb{N}$ polynomials $Q_i=t^i(1-t)^{m-i}$ where $0\leq
i\leq m$ (Bernstein basis polynomials \cite{Bern}) are linearly
independent over $\R$.
\end{lem}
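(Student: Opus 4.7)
My plan is to reduce the statement to the classical linear independence of the monomials $\{1, s, s^2, \ldots, s^m\}$ via a rational change of variable. Concretely, I will use the substitution $t = s/(s+1)$, which is a bijection from $\mathbb{R}\setminus\{-1\}$ onto $\mathbb{R}\setminus\{1\}$ and satisfies $1-t = 1/(s+1)$.

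Suppose a relation $\sum_{i=0}^{m} a_i\, t^i(1-t)^{m-i} \equiv 0$ holds identically in $t$. Applying the substitution gives
$$t^i(1-t)^{m-i} \;=\; \frac{s^i}{(s+1)^m},$$
so after clearing the common denominator $(s+1)^m$ (which is nonzero for $s\neq -1$) the relation becomes $\sum_{i=0}^{m} a_i\, s^i = 0$ on the infinite set $\mathbb{R}\setminus\{-1\}$. A real polynomial of degree at most $m$ vanishing on an infinite set is identically zero, so all coefficients $a_i$ must vanish.

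An equally short alternative would proceed by induction on $i$: evaluating at $t=0$ kills every $P_i$ with $i\ge 1$ and immediately gives $a_0=0$; dividing the remaining identity by the common factor $t$ reduces the problem to an analogous relation, whose evaluation at $t=0$ yields $a_1=0$, and so on. I would probably mention this variant in passing, since it does not even require a change of variable.

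I do not expect any real obstacle here; the statement is a standard property of the Bernstein basis and the only point that warrants explicit justification is the elementary fact that a polynomial vanishing on infinitely many points is the zero polynomial.
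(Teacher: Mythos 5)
Your argument is correct. Note first that the paper itself gives no proof of this lemma at all: it is stated as a known fact with a citation to Bernstein's collected works, so there is no in-paper argument to compare against. Your primary route --- substituting $t = s/(s+1)$, so that $1-t = 1/(s+1)$ and $t^i(1-t)^{m-i} = s^i/(s+1)^m$, then clearing the denominator to reduce the relation to $\sum_i a_i s^i = 0$ on the infinite set $\R\setminus\{-1\}$ --- is sound; the only fact you invoke is that a real polynomial vanishing on an infinite set is the zero polynomial, which you correctly flag as the one step needing justification. (Bijectivity of the substitution is not actually needed, only that the image set of $s$-values is infinite.) Your sketched alternative, peeling off coefficients by evaluating at $t=0$ and repeatedly dividing by $t$, is equally valid and is essentially the observation that the matrix expressing the $P_i$ in the monomial basis is triangular with unit diagonal. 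Either version would serve as a complete, self-contained proof where the paper offers only a reference.
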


Now consider any simplicial dissection $\Delta$ of the polytope
$P$. For each simplex $T_i^j\in \Delta$ denote the correspondent constant from Lemma \ref{lem:mink} by $c_i^j$. Define $c_i(\Delta)=\sum\limits_{j=1}^{q_i} c_i^j(\Delta)$.

\begin{lem}\label{lem:coefind}
All $c_i$ are independent of $\Delta$ and are determined only by the
polytope $P$, the choice of parallel hyperplanes containing all the vertices of $P$, and $i$.
\end{lem}
\begin{proof}
Suppose we have two dissections $\Delta$ and $\Delta'$. Let us
prove that $c_i(\Delta)=c_i(\Delta')$ for all $i$ such that $0 \leq i \leq
n$. If $S(t)$ is the $(n-1)$-dimensional volume of intersection
of a hyperplane $x_1=t$ with $P$, then we achieve that
$c_1(\Delta)Q_{n-1}+\ldots+c_n(\Delta)Q_{0}= S$ (here
$Q_i$ are Bernstein polynomials for $m=n-1$ and the equality is for functions on $[0,1]$). Analogously
$c_1(\Delta')Q_{n-1}+\ldots+c_n(\Delta')Q_{0}= S$. Hence
$(c_1(\Delta)-c_1(\Delta'))Q_{n-1}+\ldots+(c_n(\Delta)-c_n(\Delta'))Q_{0}=
0$. By Lemma \ref{lem:bernind}, $Q_0,\ldots,Q_{n-1}$ are linearly
independent. Thus $c_i(\Delta)=c_i(\Delta')$.
\end{proof}

\begin{proof}[Proof of Theorem 2]
Express $V(T_i^j)$ in terms of $c_i^j$. Using the formula for the volume of an
$(n-1)$-dimensional section of the simplex by a hyperplane $x_1=t$
we obtain \footnote{Here $B$ and $\Gamma$ are standard Euler
functions: $B(x,y)=\int_0^1 t^{x-1}(1-t)^{y-1}dt$, $\Gamma(x)=\int_0^{\infty}e^{-t}t^{x-1}dt$}
$$V(T_i^j)=\int \limits_0^1c_i^jt^{n-i}(1-t)^{i-1} dt=
c_i^jB(n-i+1,i)=$$
$$=c_i^j\dfrac {\Gamma(n-i+1)\Gamma(i)} {\Gamma(n+1)}=c_i^j\dfrac
{(n-i)!(i-1)!} {n!}=\dfrac {c_i^j} {n\left({n-1}\atop
{i-1}\right)}.$$

Thus we achieve that $$\sum \limits_{j=1}^{q_i}V(T_i^j)=\dfrac
{\sum \limits_{j=1}^{q_i}c_i^j} {n\left({n-1}\atop
{i-1}\right)}=\dfrac {c_i} {n\left({n-1}\atop {i-1}\right)}.$$
Denote the right part of this equation by $V(i)$ and the theorem
is proved.
\end{proof}

\begin{cor}\label{cor:prism}
Denote by $S(t)$ the $(n-1)$-dimensional volume of a section of $P$ by a hyperplane
$x_1=t$.
If all conditions of Theorem \ref{thm:prismoid} hold and
$S(t)$ is constant then $V(i)=\frac 1 n V(P)$.
\end{cor}

\begin{proof}
Suppose $S(t)= S_0$. Then $c_1Q_{n-1}+\ldots+c_nQ_{0}=
S_0$. Notice that if $\beta_i=S_0\left({n-1}\atop {i-1}\right)$
then $\beta_1Q_{n-1}+\ldots+\beta_nQ_{0}=S_0\left({n-1}\atop
{0}\right)t^0(1-t)^{n-1}+\ldots+S_0\left({n-1}\atop
{n-1}\right)t^{n-1}(1-t)^0= S_0$. Similarly to the proof of
Theorem \ref{thm:prismoid} we get $(\beta_1-c_1)Q_{n-1}+\ldots+(\beta_n-c_n)Q_{0}=0$ and using the linear
independence of $Q_i$, $i=0,1,\ldots,n-1,$ we obtain that
$c_i=\beta_i$.
$$V(i)=\dfrac {c_i} {n\left({n-1}\atop {i-1}\right)}=\dfrac
{\beta_i} {n\left({n-1}\atop {i-1}\right)}= \dfrac {S_0} n.$$
Since $V(P)=\int \limits_0^1 S(t) dt=S_0$, the
corollary is proved.
\end{proof}
Notice that this corollary works for prisms and particularly for
cubes. We will use that for the following section.

\section{Lower bounds for the simplexity of cubes}

\subsection{The general construction for the lower bound of the simplexity of the $n$-cube.}

Consider any $n$-dimensional $0/1$-simplex $T$ and suppose that
its vertices $A_1,\ldots,A_{n+1}$ have coordinates
$A_1(a_{1,1},\ldots,a_{1,n}),\ldots,A_{n+1}(a_{n+1,1},\ldots,a_{n+1,n})$.
Notice that the $n-$dimensional Euclidean volume of $T$ is equal to $\frac 1 {n!}$
multiplied by the absolute
value of the determinant of the following matrix: $$M = \left(%
\begin{array}{cccc}
  1 & a_{1,1} & \ldots & a_{1,n} \\
  1 & a_{2,1} & \ldots & a_{2,n} \\
  \vdots & \vdots & \ddots & \vdots \\
  1 & a_{n+1,1} & \ldots & a_{n+1,n} \\
\end{array}%
\right)$$

Denote the $j$-th column of this matrix by $b_{j-1}$ (for instance,
$b_0$ is the first column consisting of $(n+1)$ 1's) and
$\|b_j\|^2$ by $h_j$ (here we mean the Euclidean norm, i.e. $h_j$ is
just a number of 1's in the $j$-th column). Then we define functions
$V_{k,m}(T) = V(T)$ if $h_k = m$ and $V_{k,m}(T) = 0$ if $h_k \neq
m$.

Considering the unit cube as the prism with vertices in two parallel hyperplanes $x_k=0$ and $x_k=1$ and applying Corollary \ref{cor:prism} for $i=m$ we obtain:
\begin{prop}
For each dissection $\Delta$ of the $n$-dimensional cube and for
all $k$ and $m$ such that $1 \leq k,m \leq n$ $$ \sum \limits_{T \in \Delta} V_{k,m}(T)
= \frac 1 n.$$
\end{prop}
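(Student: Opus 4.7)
The plan is to apply Corollary \ref{cor:prism} directly to the $n$-cube, treating it as a prismoid in a well-chosen way. The original theorem and corollary were stated for the pair of hyperplanes $x_1 = 0$ and $x_1 = 1$, but the choice of the first coordinate was clearly arbitrary: fixing any index $k$ with $1 \leq k \leq n$, the unit cube $[0,1]^n$ is a prismoid with respect to the parallel hyperplanes $x_k = 0$ and $x_k = 1$, and every dissection $\Delta$ of the cube is in particular a simplicial dissection of this prismoid. So the entire machinery of Theorem \ref{thm:prismoid} and Corollary \ref{cor:prism} applies with the $k$-th coordinate playing the role of $x_1$.

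Next I would match the indexing. The corollary bundles simplices by the number of vertices lying in the hyperplane $x_k = 0$; in the notation of the proposition, requiring $i_k = m$ (that is, the column $b_k$ has exactly $m$ ones) is the same as requiring that exactly $m$ vertices of $T$ have $k$-th coordinate equal to $1$, or equivalently exactly $n+1-m$ vertices have $k$-th coordinate equal to $0$. Therefore
\[
\sum_{T \in \Delta} V_{k,m}(T) \;=\; \sum_{T \in \Delta_{n+1-m}} V(T) \;=\; V(n+1-m),
\]
where $V(\cdot)$ here denotes the aggregated volume defined in Theorem \ref{thm:prismoid} for the dissection $\Delta$ relative to the $k$-th coordinate.

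Finally I would invoke the hypothesis of the corollary. The cube is a prism in the direction $x_k$: every hyperplane $x_k = t$ with $0 \leq t \leq 1$ cuts the cube in a congruent $(n-1)$-cube, so $S(t) \equiv 1$ is constant. Since $V(P) = 1$ as well, Corollary \ref{cor:prism} gives $V(i) = \tfrac{1}{n} V(P) = \tfrac{1}{n}$ for every $1 \leq i \leq n$. Applied with $i = n+1-m$ (which lies in the valid range precisely because $1 \leq m \leq n$), this yields $\sum_{T \in \Delta} V_{k,m}(T) = \tfrac{1}{n}$, as claimed.

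There is no real obstacle: once one notices that the cube is a prism along every coordinate axis and that $i_k = m$ is just a relabeling of the prismoid index, the proposition follows immediately from the corollary. The only thing to be careful about is the off-by-one translation $i \leftrightarrow n+1-m$ between "vertices in $x_k = 0$" and "number of $1$'s in column $b_k$".
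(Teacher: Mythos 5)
Your proposal is correct and is exactly the argument the paper has in mind: the paper simply asserts that the proposition ``obviously follows from Corollary \ref{cor:prism}'' (having noted just before that the corollary applies to prisms and in particular to cubes), and your write-up supplies precisely the missing details --- the cube is a prismoid with respect to $x_k=0$ and $x_k=1$ for every $k$, the constant cross-section gives $V(i)=\tfrac1n$, and $i_k=m$ corresponds to the prismoid class $i=n+1-m$. The careful handling of the index translation is a welcome addition that the paper leaves implicit.
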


Now take any $n \times n$ matrix of coefficients $\alpha_{k,m}$ such that $\sum \limits_{1 \leq k,m \leq n} \alpha_{k,m} = n$. Then by the proposition we have
$$\sum \limits_{T \in \Delta} \sum \limits_{1 \leq k,m \leq n} \alpha_{k,m} V_{k,m}(T) =  \sum \limits_{1 \leq k,m \leq n} \alpha_{k,m} \sum \limits_{T \in \Delta} V_{k,m}(T) =$$
$$=\sum \limits_{1 \leq k,m \leq n} \frac {\alpha_{k,m}} n = 1$$

Define $V^{\alpha}(T) = \sum \limits_{1 \leq k,m \leq n}
\alpha_{k,m} V_{k,m}(T)$. We will also use the term \emph{$\alpha$-volumes} for $V^{\alpha}(T)$. Then $\sum \limits_{T \in \Delta}
V^{\alpha}(T) = 1$  for any dissection $\Delta$ and, therefore, $dis(n) \geq \frac 1 {\max \limits_T
V^{\alpha}(T)}$. So in order to get the best bound we must find
$$G = \min \limits_{\alpha} \max \limits_T V^{\alpha}(T),$$
which is a problem of linear programming with respect to $\alpha$.

The following formula can simplify calculations for weighted volumes.
\begin{prop}\label{prop:alpha}
$V^{\alpha}(T) = V(T)\sum\limits_{j=1}^n\alpha_{j, h_j}$
\end{prop}
 
\begin{proof}
$V^{\alpha}(T) =  \sum \limits_{1 \leq k,m \leq n}
\alpha_{k,m} V_{k,m}(T) =  \sum \limits_{k=1}^n \sum \limits_{m=1}^n 
\alpha_{k,m} V_{k,m}(T)$

Among $V_{k,m}(T)$  ($1\leq m \leq n$, $k$ is fixed) there is only one non-zero value and that is $V_{k, h_k} (T) = V(T)$. Therefore $\sum \limits_{m=1}^n \alpha_{k,m} V_{k,m}(T) = \alpha_{k, h_k} V(T)$ and the formula is proved.
\end{proof}

We will call $\sum\limits_{j=1}^n\alpha_{j, h_j}$ the \emph{weight part} of $V^{\alpha}(T)$.

We can simplify our linear program. Assume $G$ is attained on some matrix $\alpha^1$. Consider all matrices $\alpha^2,\ldots,\alpha^t$ that can be obtained from $\alpha$ by compositions of permutations of rows ($\alpha_{k_1,m}$ $\leftrightarrow$ $\alpha_{k_2,m}$ for all $m$ such that $1\leq m\leq n$) and reflections of rows ($\alpha_{k,m}$ $\leftrightarrow$ $\alpha_{k,n+1-m}$ for all $m$ such that $1\leq m\leq n$). These transformations naturally represent cube symmetries. Then $\alpha=\frac {\alpha^1+\alpha^2+\ldots+\alpha^t} {t}$ is symmetric with respect to these transformations and satisfies the condition $\sum \limits_{1 \leq k,m \leq n} \alpha_{k,m} = n$. From Proposition \ref{prop:alpha} for each $T$, $V^{\alpha}(T)=\frac {V^{\alpha^1}(T)+V^{\alpha^2}(T)+\ldots + V^{\alpha^t}(T)} {t}$, and $\max \limits_T V^{\alpha}(T) = \max \limits_T \frac {V^{\alpha^1}(T)+\ldots + V^{\alpha^t}(T)} {t}\leq \frac {\max \limits_T V^{\alpha^1}(T) + \ldots + \max \limits_T V^{\alpha^t}(T)} {t} = G$ and, therefore, $\max \limits_T V^{\alpha}(T) = G$. Hence it is enough to consider $\alpha$ satisfying $\alpha_{k_1,m} = \alpha_{k_2,m}$ and $\alpha_{k,m} = \alpha_{k,n+1-m}$ for all $m$, $1 \leq m \leq n$. From now on we use notations $\alpha_m =
\alpha_{k,m}$ with the conditions $\sum\limits_{m=1}^n \alpha_m = 1$ and
$\alpha_m = \alpha_{n+1-m}$ for all $m$.

The other idea that can help us to simplify the linear program is the following. If we have simplices with the same weight parts of $V^{\alpha}$ but different Euclidean volumes we can consider only those with the largest Euclidean volume for the purpose of finding the maximum. The weight part may be negative but then these simplices definitely are not representatives of the largest $V^{\alpha}$ since the sum of $\alpha$-volumes is exactly 1 for any dissection and subsequently some positive $\alpha$-volumes exist.

Taking into account these simplifications, we found some lower bounds for small dimensions.

\begin{expl}[Simplexity of the 4-cube]
Using the simplifying conditions on $\alpha$ we have $\alpha_1+\alpha_2+\alpha_3+\alpha_4=1$ and $\alpha_1=\alpha_4$, $\alpha_2=\alpha_3$. Hence we get $\alpha_1+\alpha_2=\frac 1  2$. From Proposition \ref{prop:alpha} the weight parts of $V^{\alpha}$ may be only $4\alpha_1$, $3\alpha_1+\alpha_2$, $2\alpha_1+2\alpha_2$, $\alpha_1+3\alpha_2$, $4\alpha_2$. In the first four cases the distribution of simplex vertices for some pair of parallel facets of the cube is 1-4 (one vertex on one facet and four other vertices on the parallel facet). So this simplex can be considered as a pyramid with its base in one of the cube facets. As we know, a simplex with vertices in the vertices of the $3$-dimensional $0/1$-cube has volume equal to $\frac 1 3$ if the distribution of its vertices for each pair of parallel facets of this $3$-cube is 2-2 and volume equal to $\frac 1 6$ in all other cases. The first case for the base of the pyramid gives the $4$-dimensional volume equal to $\frac 1 4 \cdot \frac 1 3 = \frac 2 {24}$ and corresponds to the weight volume $\alpha_1+3\alpha_2$. Other cases for the base give the $4$-dimensional volume equal to $\frac 1 4 \cdot \frac 1 6 = \frac 1 {24}$ and correspond to the weight volumes $4\alpha_1$, $3\alpha_1+\alpha_2$, $2\alpha_1+2\alpha_2$. From Lemma \ref{lem:haddet}, the volume of any $4$-dimensional $0/1$-simplex is not greater than $\frac 3 {24}$ and it is attained on the simplex with vertices $(0,0,0,0)$, $(0,1,1,1)$, $(1,0,1,1)$, $(1,1,0,1)$, $(1,1,1,0)$. On the whole we showed that the maximum Euclidean volumes for weight volumes $4\alpha_1$, $3\alpha_1+\alpha_2$, $2\alpha_1+2\alpha_2$, $\alpha_1+3\alpha_2$, $4\alpha_2$ are $\frac 1 {24}, \frac 1 {24}, \frac 1 {24}, \frac 2 {24}, \frac 3 {24}$ respectively. Therefore, we need to minimize $\max\{\frac {4\alpha_1} {24}, \frac {3\alpha_1+\alpha_2} {24}, \frac {2\alpha_1+2\alpha_2} {24}, \frac {2\alpha_1+6\alpha_2} {24}, \frac {12\alpha_2} {24}\}$ given $\alpha_1+\alpha_2 = \frac 1 2$. In terms of the linear programming we maximize $-m$ given $m \geq \frac {4\alpha_1} {24}, m \geq \frac {3\alpha_1+\alpha_2} {24}, m \geq \frac {2\alpha_1+2\alpha_2} {24}, m \geq \frac {2\alpha_1+6\alpha_2} {24}, m \geq \frac {12\alpha_2} {24}, \alpha_1+\alpha_2 = \frac 1 2.$ Though the feasible region is unbounded, it is bounded in the direction of the gradient of the objective function and hence the optimal value is attained. Solving this linear program we obtain the optimal value $\min \limits_{\alpha} \max \limits_T V^{\alpha}(T) = \frac 1 {16}$ for $\alpha_1=\frac 3 8$ and $\alpha_2=\frac 1 8$. Subsequently the minimal dissection contains at least $16$ simplices.

The condition $V^{\alpha}(T) = \frac 1 {16}$ for $\alpha_1=\frac 3 8$ and $\alpha_2=\frac 1 8$ is quite restrictive. Only simplices with the weight parts $4\alpha_1$, $\alpha_1+3\alpha_2$, $4\alpha_2$ and the maximal possible Euclidean volume for this part satisfy it. Denote the number of simplices in these groups by $m_1, m_2$ and $m_3$ respectively. Simplices from the first group are corner simplices. Since corner vertices of corner simplices in a dissection cannot be adjacent, $m_1\leq 8$. Using the additional equation on the Euclidean volume we obtain that
$$1=\frac 1 {24} m_1 + \frac 2 {24} m_2 + \frac 3 {24} m_3$$
$$16=m_1+m_2+m_3$$
$$m_1\leq 8$$

The only non-negative integer solution of this system is $m_1=8, m_2=8, m_3=0$ and subsequently we obtain another proof for the structure of minimal triangulations of the $4$-cube (see \cite{Cottle}).
\end{expl}

Analogously by the exhaustive case analysis for $n = 5,6$ we were able to
find all linear constraints but the lower bounds, 60 and 240 respectively, obtained by our linear program are
smaller than the known bounds for the number of simplices in
dissections. Nevertheless this method allows us to prove the new
asymptotic lower bound.

\subsection{New asymptotic lower bound}

Let us prove a generalization of Lemma \ref{lem:haddet}. Here we use the same notations as in the previous subsection.
\begin{lem}\label{lem:detgen}
For any $n$-dimensional $0/1$-simplex $T$,
$(det\ M)^2 \leq (n+1)^{1-n} \prod \limits_{j=1}^n h_j(n+1 - h_j)$.
\end{lem}

\begin{proof}
For each column $b_j$ of $M$, $j \geq 1,$ make a transformation $\phi_j: b_j \longmapsto
b_j'= b_j - \frac {h_j} {n+1} b_0$. Each of these transformations does not change
$det\ M$. Then every 1 in $b_j$ will be changed to $\frac {n+1-h_j} {n+1}$ and every 0 will be changed to $-\frac {h_j} {n+1}$. Therefore, $$\|b_j'\|^2 = h_j \frac {(n+1-h_j)^2}
{(n+1)^2} + (n+1-h_j) \frac {h_j^2} {(n+1)^2} = \frac {h_j
(n+1-h_j)} {n+1}.$$ By Hadamard's inequality $$(det\ M)^2 \leq
(n+1) \prod \limits_{j=1}^n \|b_j\|^2 = (n+1)^{1-n}
\prod \limits_{j=1}^n h_j(n+1 -
h_j).$$ Hence the lemma is proved.
\end{proof}

Now we explicitly set $\alpha_i = \frac 1 n (1 + \ln(n!)) - \frac 1 2 \ln(i(n+1-i))$ for all $i$, $1\leq i \leq n$, so that $\sum \alpha_i = 1$ and $\alpha_i=\alpha_{n+1-i}$. Using Proposition \ref{prop:alpha} and the formula $V(T)=\frac 1 {n!}|det M|$ we get

$$V^{\alpha}(T)= \frac 1 {n!} |det M| \sum
\limits_{j=1}^{n}\alpha_{h_j} = \frac 1 {n!} |det M| \sum
\limits_{j=1}^{n}\left(\frac 1 n (1 + \ln(n!)) - \frac 1 2 \ln(h_j(n+1-h_j))\right) =$$ $$=\frac 1
{n!} |det M| \left(1 + \ln(n!) - \frac 1 2 \ln\prod \limits_{j=1}^n
h_j (n+1 - h_j)\right)$$ Then we use the
inequality on $det\ M$ from Lemma \ref{lem:detgen} (we are
interested only in positive $\alpha$-volumes, for them the use of the inequality is correct):
$$V^{\alpha}(T) \leq \frac 1 {n!} \sqrt{(n+1)^{1-n} \prod
\limits_{j=1}^n h_j (n+1 - h_j)} \left(1 +
\ln(n!) - \frac 1 2 \ln\prod \limits_{j=1}^n h_j (n+1 - h_j)\right).$$ Denote $\frac 1 2 \ln\prod
\limits_{j=1}^n h_j (n+1 - h_j)$ by $t$.
Then this inequality can be rewritten in the following
form:$$V^{\alpha}(T) \leq \frac 1 {n!} (n+1)^{\frac {1-n} 2}
g(t),$$ where $g(t) = e^t (1 + \ln(n!) - t)$. Let us find the maximum of this function. $g'(t) = e^t (\ln(n!) - t)$, so $g(t)$
reaches its maximum at $t = \ln(n!)$ and $\max{g(t)} = n!$ Thus,
$$V^{\alpha}(T) \leq \frac 1 {n!} (n+1)^{\frac {1-n} 2} n! =
(n+1)^{\frac {1-n} 2},$$ and we have proved the following theorem.

\begin{thm}\label{thm:asbound}
For any natural $n$ $$dis(n) \geq (n+1)^{\frac {n-1} 2} =: F(n)$$
\end{thm}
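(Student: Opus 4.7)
The plan is to instantiate the linear-programming bound $dis(n) \geq 1/\max_T V^\alpha(T)$ from the previous subsection with a specific choice of symmetric weights $\alpha$ designed to match the determinant estimate of Lemma \ref{lem:detgen}. Under the symmetrization $\alpha_{k,m} = \alpha_m$ together with the normalization $\sum_m \alpha_m = 1$, a short rearrangement collapses the weighted volume of a $0/1$-simplex $T$ whose columns have $i_1,\ldots,i_n$ ones to
\[
V^\alpha(T) \;=\; \frac{|\det M(T)|}{n!}\sum_{j=1}^{n} \alpha_{i_j},
\]
so the task reduces to picking $\alpha$ that keeps this quantity small uniformly over all $T$.

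The guiding clue is that Lemma \ref{lem:detgen} bounds $|\det M|$ by the square root of a product over columns; taking logarithms decouples this bound into a sum of per-column terms $\tfrac{1}{2}\ln(i_j(n+1-i_j))$. This invites the logarithmic ansatz $\alpha_i = c - \tfrac{1}{2}\ln\bigl(i(n+1-i)\bigr)$, which is automatically invariant under $i\mapsto n+1-i$, with $c=(1+\ln n!)/n$ forced by the normalization $\sum_{i=1}^n\alpha_i = 1$. Introducing the shorthand $t := \tfrac{1}{2}\ln\prod_{j=1}^n i_j(n+1-i_j)$, one immediately gets $\sum_j \alpha_{i_j} = 1 + \ln n! - t$.

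Combining this with Lemma \ref{lem:detgen} on the simplices where $1+\ln n! - t \geq 0$ (on the complement $V^\alpha(T)\leq 0$, and the desired inequality is automatic), I would then deduce
\[
V^\alpha(T) \;\leq\; \frac{(n+1)^{(1-n)/2}}{n!}\, h(t), \qquad h(t) := e^t(1+\ln n! - t).
\]
A single-variable calculus check — $h'(t) = e^t(\ln n! - t)$, vanishing uniquely at $t=\ln n!$ — shows $h_{\max}=n!$, hence $V^\alpha(T)\leq (n+1)^{(1-n)/2}$ uniformly and $dis(n) \geq (n+1)^{(n-1)/2}$ follows.

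The only genuinely creative step is spotting the logarithmic ansatz for $\alpha$: because it linearizes the log of the Hadamard-type bound, the per-column sum $\sum_j \alpha_{i_j}$ becomes a clean affine function of $t$, which is precisely what reduces the whole estimate to a one-variable optimization. Everything else is mechanical, once one is careful about the sign issue when substituting the upper bound for $|\det M|$ — the inequality direction is preserved only when $1+\ln n! - t \geq 0$, but the simplices violating this are harmless since they contribute nonpositive $V^\alpha$.
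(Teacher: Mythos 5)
Your proposal is correct and follows essentially the same route as the paper: the same logarithmic ansatz $\alpha_i = c - \tfrac12\ln(i(n+1-i))$ with $c=(1+\ln n!)/n$, the same substitution of Lemma \ref{lem:detgen} after setting $t=\tfrac12\ln\prod_j i_j(n+1-i_j)$, and the same one-variable maximization of $h(t)=e^t(1+\ln n! - t)$ at $t=\ln n!$. Your explicit remark that the determinant bound may only be substituted when $1+\ln n! - t\geq 0$ makes precise the paper's brief parenthetical about restricting to positive weighted volumes.
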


This bound gives the asymptotic improvement with respect to
the Euclidean bound $$\lim_{n \rightarrow \infty} \left(\frac
{F(n)} {E(n)} \right)^{\frac 1 n} = \lim_{n \rightarrow \infty}
\left(\frac {n^{\frac n 2}} {n^{\frac n 2} (\frac 2 e)^n}
\right)^{\frac 1 n} = \frac e 2 \approx 1.359140914$$

\section{Acknowledgements}
For useful discussions and valuable comments the author thanks
Arseniy Akopyan, Nikolai Dolbilin, Alexey Garber, and G\"unter M.
Ziegler.

This work is supported by the Russian government project 11.G34.31.0053.


\begin{thebibliography}{99}


\bibitem{Bern}
S.~N.~Bernstein. \emph{Sobranie Sochineniy}. Moscow, 1 (1952), 105-106.

\bibitem{bliss-su}
A.~Bliss, F.~E.~Su. \emph{Lower bounds for simplicial covers and
triangulations of cubes}. Discrete Comput. Geom. 33 (2005),
669-686.

\bibitem{brehm}
A.~Below, U.~Brehm, J.~A.~De~Loera, and J.~Richter-Gebert. \emph{Minimal Simplicial Dissections and Triangulations
of Convex 3-Polytopes}. Discrete Comput Geom 24:35-48 (2000)

\bibitem{Cottle}
R.~W.~Cottle. \emph{Minimal triangulation of the 4-cube}. Discrete
Math., 40(1):25-29, 1982.

\bibitem{Glaz}
A.~Glazyrin. \emph{On simplicial partitions of polytopes}. Mathematical Notes, 85(5-6):799-806, 2009.

\bibitem{Haiman}
H.~Haiman. \emph{A simple and relatively efficient triangulation
of the n-cube}. Discrete Comput. Geom., 6, 287-289 (1991).

\bibitem{Hughes}
R. B. Hughes. \emph{Lower bounds on cube simplexity}. Discrete
Math., 133(1-3):123-138, 1994.

\bibitem{HA}
R. B. Hughes and M. R. Anderson. \emph{Simplexity of the cube}.
Discrete Math., 158(1-3):99-150, 1996.

\bibitem{Mara}
P. S. Mara. \emph{Triangulations for the cube}. J. Combinatorial
Theory Ser. A, 20(2):170-177, 1976.

\bibitem{OS}
D.~Orden, F.~Santos. \emph{Asymptotically efficient triangulations
of the d-cube.---} Discrete and Computational Geometry 30(4):
509-528 (2003)

\bibitem{Sallee2}
J.~F.~Sallee. \emph{A note on minimal triangulations of an
n-cube}. Discrete Appl. Math., 4(3):211-215, 1982.

\bibitem{Sallee}
J.~F.~Sallee. \emph{A triangulation of the n-cube}. Discrete
Math., 40(1):81-86, 1982.

\bibitem{Smith}
W.~D.~Smith. \emph{A lower bound for simplexity of the n-cube via
hyperbolic volumes.---} European J. Combin. 21(1) (2000), 131-137.

\bibitem{Todd}
M.~J.~Todd. \emph{The computation of fixed points and
applications}. In Lecture Notes in Economics and Mathematical
Systems, Springer-Verlag, New York, 1976.

\bibitem{Ziegler1}
G.~Ziegler. \emph{Lectures on polytopes.---} volume 152 of
Graduate Texts in Mathematics. Springer-Verlag, New York, 1995.

\bibitem{Ziegler2}
G.~Ziegler. \emph{Lectures on 0/1-polytopes.---} Combinatorics and
Computation, volume 29 of DMV Seminars, pages 1-41.
Birkhauser-Verlag, Basel, 2000.

\bibitem{Zong1}
C. Zong. \emph{What is known about unit cubes}, Bull. Amer. Math.
Soc., 42 (2005), 181-211.

\bibitem{Zong2}
C. Zong. \emph{The Cubes: A Window to Convex and Discrete
Geometry}, Cambridge University Press, 2006.

\end{thebibliography}
\end{document}